\patchcmd{\ttlh@hang}{\parindent\z@}{\parindent\z@\leavevmode}{}{}
\patchcmd{\ttlh@hang}{\noindent}{}{}{}
\numberwithin{equation}{section}
\journal{ }
\newtheorem{theorem}{Theorem}[section]
\newtheorem{lemma}[theorem]{Lemma}
\begin{document}

\begin{frontmatter}



\title{A note on the Riemann solutions to the isentropic Euler equations in the vanishing pressure limit}

\author[Author1]{Sana Keita}    \ead{skeit085@uottawa.ca}
\author[Author1]{Yves Bourgault\corref{cor1}} \ead{ybourg@uottawa.ca}
\cortext[cor1]{Corresponding author.}
\address[Author1]{Department of Mathematics and Statistics, University of Ottawa, K1N 6N5, Ottawa, Ontario, Canada}

\begin{abstract}
The behaviour of the solutions to the Riemann problem for the isentropic Euler equations when the pressure vanishes is analysed. It is shown that any solution composed of a $1$-shock wave and a $2$-rarefaction wave tends to a two-shock wave when the pressure gets smaller than a fixed value determined by the Riemann data; by contrast, any solution composed of a $1$-rarefaction wave and  a $2$-shock wave tends to a two-rarefaction wave. The two situations are illustrated with numerical tests.
\end{abstract}

\begin{keyword}

Euler equations \sep isentropic fluids  \sep vanishing pressure limit \sep $\delta$-shocks \sep vacuum states 


\MSC[2010] 35L40 \sep 35L65  \sep 35L67

\end{keyword}

\end{frontmatter}

\section{Introduction}
The one-dimensional isentropic Euler equations of gas dynamics are
\begin{equation}
\left\lbrace
\begin{aligned}
&\partial_t\rho+\partial_x(\rho v)=0,\\
&\partial_t(\rho v)+\partial_x(\rho v^2+p)=0,
\end{aligned}
\right.
\label{EulerEqu}
\end{equation} 
where $\rho>0$, $v$ and $p$ are the density, velocity and pressure of the gas, respectively.  The  pressure depends on the density and is determined from the constitutive thermodynamic relations of the gas under consideration. We restrict ourselves to polytropic perfect gases for which the state equation for the pressure is given by
\begin{equation}
p=p(\rho)=\kappa\rho^{\gamma}, \quad \kappa>0,\text{ }\gamma>1.
\label{PressIsentrFlow}
\end{equation}
System \eqref{EulerEqu}-\eqref{PressIsentrFlow} describes the flow of isentropic compressible fluids. Formally, the limit system of \eqref{EulerEqu}  when the pressure vanishes is 
\begin{equation}
\left\lbrace
\begin{aligned}
&\partial_t\rho+\partial_x(\rho v)=0,\\
&\partial_t(\rho v)+\partial_x(\rho v^2)=0.
\end{aligned}
\right.
\label{NoPressEulerEqu}
\end{equation}
System \eqref{NoPressEulerEqu} is known as the \textit{zero-pressure gas dynamics system} \cite{Bouchut2} or as the \textit{sticky particle system} \cite{weinan1996} that arises in the modelling of particles hitting and sticking to each other to explain the formation of large scale structures in the universe. Under suitable generalized Rankine-Hugoniot relation and entropy condition, the Riemann problem for \eqref{NoPressEulerEqu} is constructively solved in \cite{Sheng}. The Riemann solution is either a $\delta$-shock or a two-contact-discontinuity with a vacuum state, depending on the Riemann data.



There are four possible configurations for a solution to the Riemann problem for \eqref{EulerEqu}-\eqref{PressIsentrFlow}: two-shock, two-rarefaction, $1$-shock combined with $2$-rarefaction or $1$-rarefaction combined with $2$-shock. For more details, see \cite{Smoller}.  
Chen and Liu  \cite{Vacuum} analysed the behaviour of Riemann solutions for \eqref{EulerEqu}-\eqref{PressIsentrFlow} when the pressure vanishes. They established that: any two-shock tends to a $\delta$-shock of \eqref{NoPressEulerEqu}; by contrast, any two-rarefaction tends to a two-contact-discontinuity with a vacuum state of \eqref{NoPressEulerEqu}. These results were extended for nonisentropic flows \cite{CHEN2004}, for the relativistic Euler equations for polytropic gases \cite{YIN2009}, and recently for the modified Chaplygin gas pressure law \cite{YANG2014}. Chen and Liu \cite{Vacuum} also mentioned that the behaviour of a $1$-shock combined with $2$-rarefaction  or a $1$-rarefaction combined with $2$-shock  can be deduced from the two above results. As far as we know, the proof and complete analysis of these two cases have not been clearly discussed in the literature on the degeneracy of the Euler equations. Since these two other cases must be dealt in a non-trivial manner, we will provide a complete proof.

The rest of this paper is organized as follows.  In section \ref{SectWavesCurves}, we recall the relations for shock and rarefaction waves for the Riemann problem of \eqref{EulerEqu}-\eqref{PressIsentrFlow}. In the sections \ref{ShockRar} and \ref{RarShock}, we analyse the behaviour a $1$-shock combined with $2$-rarefaction and a $1$-rarefaction wave combined with $2$-shock when the pressure vanishes, respectively. Numerical illustrations are carried out in section \ref{SectNumRes}.

\section{Rarefaction and shock curves}
\label{SectWavesCurves}
The Jacobian matrix of system \eqref{EulerEqu}-\eqref{PressIsentrFlow}
has two eigenvalues
\begin{equation}
\lambda_1=v-c, \quad \lambda_2=v+c,
\label{EulerEquEigVal}
\end{equation}
where $c=\sqrt{p'(\rho)}$ is called the \textit{sound speed}. The $j$-Riemann invariants are 
\begin{equation}
\phi_1=v+\dfrac{2}{\gamma-1}c \quad \text{and}\quad \phi_2=v-\dfrac{2}{\gamma-1}c.
\end{equation}
The Riemann problem for the isentropic Euler equations consists in the solutions of \eqref{EulerEqu}-\eqref{PressIsentrFlow} with initial data
\begin{equation}
(\rho,v)(x,0)=(\rho_{\pm},v_{\pm}),\quad \pm x>0,\\
\label{EulerEquInitCond}
\end{equation} 
where $\rho_-$, $\rho_+$ $\in\mathbb{R}^+$ and $v_-$, $v_+$  $\in\mathbb{R}$ are given constants. The solutions consist of rarefaction and shock waves \cite{Smoller}.  We briefly recall the relations for these waves for completeness.
\subsection{Rarefaction curves}
Given a state $(\rho_-,v_-)$, we look for the set of states $(\rho_{*},v_{*})$ that can be connected from the left to the state $(\rho_-,v_-)$ by a $1$-rarefaction wave. The states $(\rho_{*},v_{*})$ depend on the pressure $p$ which is parametrized by the coefficient $\kappa$. Therefore, in the following, these states are denoted by $(\rho_{*}^{\kappa},v_{*}^{\kappa})$. Using the fact that in a $j$-rarefaction wave, a $j$-Riemann invariant is constant (see Theorem 3.2 in \cite{Serre}) and $\lambda_1$ increases from the left to the right, one establishes that the state $(\rho_{*}^{\kappa},v_{*}^{\kappa})$ satisfies
\begin{equation}
v_{*}^{\kappa}=v_-+\dfrac{2}{\gamma-1}(c_--c_{*}^{\kappa})=v_-+\dfrac{2\sqrt{\kappa\gamma}}{\gamma-1}\big(\rho_-^{\frac{\gamma-1}{2}}-(\rho_{*}^{\kappa})^{\frac{\gamma-1}{2}}\big),\quad \rho_{*}^{\kappa}<\rho_-.
\label{1Rarefaction}
\end{equation} 
In a same way, one establishes that a state $(\rho_{*}^{\kappa},v_{*}^{\kappa})$ that can be connected from the right to a given state $(\rho_+,v_+)$ by a $2$-rarefaction wave satisfies 
\begin{equation}
v_{*}^{\kappa}=v_+-\dfrac{2\sqrt{\kappa\gamma}}{\gamma-1}\big(\rho_+^{\frac{\gamma-1}{2}}-(\rho_{*}^{\kappa})^{\frac{\gamma-1}{2}}\big),\quad \rho_{*}^{\kappa}<\rho_+.
\label{2Rarefaction}
\end{equation}  
\subsection{Shock curves}
Given a state $(\rho_-,v_-)$, we look for the set of states $(\rho_{*}^{\kappa},v_{*}^{\kappa})$ that can be connected from the left to the state $(\rho_-,v_-)$ by a $1$-shock wave. Combining the \textit{Rankine-Hugoniot conditions} \cite{Serre} and the \textit{Lax's entropy conditions} \cite{Smoller,Serre} for system \eqref{EulerEqu}-\eqref{PressIsentrFlow}, one establishes that the state  $(\rho_{*}^{\kappa},v_{*}^{\kappa})$  satisfies
\begin{equation}
v_{*}^{\kappa}=v_--\sqrt{\dfrac{\kappa\big((\rho_{*}^{\kappa})^{\gamma}-{\rho_-}^{\gamma}\big)}{\rho_{*}^{\kappa}\rho_-(\rho_{*}^{\kappa}-\rho_-)}}(\rho_{*}^{\kappa}-\rho_-), \quad \rho_{*}^{\kappa}>\rho_-.
\label{1Shock}
\end{equation}
In an analogous way, one establishes that a state $(\rho_{*}^{\kappa},v_{*}^{\kappa})$ that can be connected from the right to a given state $(\rho_+,v_+)$ by a $2$-shock satisfies
\begin{align}
v_{*}^{\kappa}=v_++\sqrt{\dfrac{\kappa\big({\rho_+}^{\gamma}-(\rho_{*}^{\kappa})^{\gamma}\big)}{\rho_+\rho_{*}^{\kappa}(\rho_+-\rho_{*}^{\kappa})}}(\rho_{*}^{\kappa}-\rho_+),\quad \rho_{*}^{\kappa}>\rho_+.
\label{2Shock}
\end{align}

\section{Behaviour of a $1$-shock and $2$-rarefaction Riemann solution as the pressure vanishes}
\label{ShockRar}
Let $v_->v_+$ and $\rho_{\pm}>0$. For $\kappa>0$, let $(\rho_*^{\kappa},\rho_*^{\kappa}v_*^{\kappa})$ be the intermediate state of a solution $(\rho^{\kappa},\rho^{\kappa}v^{\kappa})$ of the system \eqref{EulerEqu}-\eqref{PressIsentrFlow} with Riemann data \eqref{EulerEquInitCond}, in the sense that $v_-$ and $v_*^{\kappa}$ are connected by a $1$-shock wave, and $v_*^{\kappa}$ and $u_+$ are connected by a $2$-rarefaction wave. Then, this intermediate state is determined by \eqref{2Rarefaction} and \eqref{1Shock}, from which we immediately deduce that $\rho_+>\rho_-$. The following results also hold:
\begin{lemma}
\label{krefsrlem1}
\begin{align}
v_->v_*^{\kappa}> v_+,\text{ } \forall \kappa \in(0,\kappa_{sr}),\quad v_*^{\kappa}=v_+ \Longleftrightarrow \kappa=\kappa_{sr}:=\dfrac{\rho_-\rho_+(v_--v_+)^2}{(\rho_+^{\gamma}-\rho_-^{\gamma})(\rho_+-\rho_-)}.
\label{Ksr}
\end{align}
\end{lemma}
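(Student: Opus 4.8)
The plan is to reduce the two-sided estimate and the equality case to a single monotone comparison between $\rho_*^\kappa$ and $\rho_+$. The bound $v_*^\kappa<v_-$ is immediate: a $1$-shock forces $\rho_*^\kappa>\rho_-$, so the quantity subtracted on the right-hand side of \eqref{1Shock} is strictly positive and $v_*^\kappa<v_-$ for every $\kappa>0$. For the remaining bound I rewrite \eqref{2Rarefaction} as
\[
v_*^\kappa-v_+=\frac{2\sqrt{\kappa\gamma}}{\gamma-1}\Big((\rho_*^\kappa)^{\frac{\gamma-1}{2}}-\rho_+^{\frac{\gamma-1}{2}}\Big);
\]
since $\gamma>1$, the positive prefactor and the increasing map $\rho\mapsto\rho^{(\gamma-1)/2}$ show that $v_*^\kappa-v_+$ has the same sign as $\rho_*^\kappa-\rho_+$. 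Hence it suffices to compare $\rho_*^\kappa$ with $\rho_+$.

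Next I would eliminate $v_*^\kappa$. After absorbing the factor $(\rho_*^\kappa-\rho_-)$ under the root, \eqref{1Shock} reads $v_--v_*^\kappa=\sqrt{\kappa}\,g_1(\rho_*^\kappa)$ with $g_1(\rho)=\sqrt{(\rho^\gamma-\rho_-^\gamma)(\rho-\rho_-)/(\rho\rho_-)}$, while the displayed identity above is $v_*^\kappa-v_+=\sqrt{\kappa}\,g_2(\rho_*^\kappa)$ with $g_2(\rho)=\frac{2\sqrt{\gamma}}{\gamma-1}(\rho^{(\gamma-1)/2}-\rho_+^{(\gamma-1)/2})$. Adding these eliminates $v_*^\kappa$ and yields the single scalar equation
\[
v_--v_+=\sqrt{\kappa}\,G(\rho_*^\kappa),\qquad G:=g_1+g_2 .
\]
The heart of the proof is to show that $G$ is strictly increasing on $(\rho_-,\infty)$. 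Granting this, $G$ maps $(\rho_-,\infty)$ bijectively onto $(G(\rho_-^{+}),\infty)$ with $G(\rho_-^{+})=g_2(\rho_-)<0$ (using $\rho_+>\rho_-$, already observed), so for each $\kappa>0$ the equation has a unique root $\rho_*^\kappa>\rho_-$, and $\kappa\mapsto\rho_*^\kappa$ is strictly decreasing since the left-hand side is fixed and positive while $\sqrt{\kappa}$ grows.

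It then remains to locate the threshold. Setting $\rho_*^\kappa=\rho_+$ in \eqref{1Shock} and squaring gives $(v_--v_+)^2=\kappa\,(\rho_+^\gamma-\rho_-^\gamma)(\rho_+-\rho_-)/(\rho_+\rho_-)$, i.e. exactly $\kappa=\kappa_{sr}$; equivalently $G(\rho_+)=(v_--v_+)/\sqrt{\kappa_{sr}}$. Since $G$ is increasing and $v_--v_+>0$,
\[
\rho_*^\kappa>\rho_+\iff G(\rho_*^\kappa)>G(\rho_+)\iff \frac{v_--v_+}{\sqrt{\kappa}}>\frac{v_--v_+}{\sqrt{\kappa_{sr}}}\iff \kappa<\kappa_{sr},
\]
and likewise $\rho_*^\kappa=\rho_+\iff\kappa=\kappa_{sr}$. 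Pushing these through the sign equivalence of the first paragraph gives $v_*^\kappa>v_+$ for $\kappa\in(0,\kappa_{sr})$ and $v_*^\kappa=v_+\iff\kappa=\kappa_{sr}$; combined with $v_*^\kappa<v_-$, this is the assertion.

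The one genuine computation, and the step I expect to be the main obstacle, is the monotonicity of $G$. Its summand $g_2$ is trivially increasing. For $g_1$ it is cleanest to show that the radicand $R(\rho)=(\rho^\gamma-\rho_-^\gamma)(\rho-\rho_-)/\rho$ is increasing by logarithmic differentiation: $R'(\rho)/R(\rho)=\gamma\rho^{\gamma-1}/(\rho^\gamma-\rho_-^\gamma)+1/(\rho-\rho_-)-1/\rho$, which is positive for $\rho>\rho_-$ because the first term is positive and $1/(\rho-\rho_-)>1/\rho$. Everything else is bookkeeping.
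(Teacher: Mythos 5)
Your computational core is sound, and it is in fact the same mechanism the paper uses: your $G=g_1+g_2$ is, up to the factor $\sqrt{\kappa}$, exactly the right-hand side of the paper's relations \eqref{kref3} and \eqref{kref5}; your logarithmic-differentiation argument that $R(\rho)=(\rho^{\gamma}-\rho_-^{\gamma})(\rho-\rho_-)/\rho$ increases is a correct (and welcome, since the paper omits it) justification of the paper's claim that $h_1(\rho)=(\rho^{\gamma}-\rho_-^{\gamma})(1-\rho_-/\rho)$ increases — note $R=h_1$; and your identity $G(\rho_+)=(v_--v_+)/\sqrt{\kappa_{sr}}$ is precisely the paper's computation of the threshold.

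There is, however, a genuine gap in how you pass from the root of your scalar equation to the lemma. The relation \eqref{2Rarefaction} carries the admissibility constraint $\rho_*^{\kappa}<\rho_+$; you silently extend it to all $\rho>\rho_-$, and you use this extension both for your ``sign equivalence'' and to define $\rho_*^{\kappa}$ as the root of $G(\rho)=(v_--v_+)/\sqrt{\kappa}$. For $\kappa<\kappa_{sr}$ that root lies above $\rho_+$, and in that range the extended relation describes no admissible wave: a state with $\rho>\rho_+$ is connected to $(\rho_+,v_+)$ by a $2$-shock, governed by \eqref{2Shock}, not by the analytic continuation of \eqref{2Rarefaction}. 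So for $\kappa<\kappa_{sr}$ your pair $(\rho_*^{\kappa},v_*^{\kappa})$ is a fictitious state, and the inequality $v_*^{\kappa}>v_+$ you deduce is an inequality about that fictitious state, not about the intermediate state of the Riemann solution. The actual content of the lemma — which the paper proves by contradiction — is that for $\kappa<\kappa_{sr}$ the solution cannot have the $1$-shock/$2$-rarefaction configuration, hence (the configurations with a $1$-rarefaction being excluded by $\rho_+>\rho_-$, and the two-rarefaction by $v_->v_+$) it is a two-shock, after which $v_+<v_*^{\kappa}<v_-$ is read off directly from \eqref{1Shock} and \eqref{2Shock}. The good news is that your monotonicity computation supplies exactly the missing contradiction: if the solution were $1$-shock plus $2$-rarefaction for some $\kappa<\kappa_{sr}$, its intermediate density would satisfy both $\rho_*^{\kappa}<\rho_+$ and $G(\rho_*^{\kappa})=(v_--v_+)/\sqrt{\kappa}>(v_--v_+)/\sqrt{\kappa_{sr}}=G(\rho_+)$, contradicting that $G$ is increasing — this is the paper's incompatibility of \eqref{kref4} and \eqref{kref5}. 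Recast your final paragraph in that form, then state the resulting two-shock structure and conclude; as written, the structural switch at $\kappa=\kappa_{sr}$ is never addressed, and that switch is the whole point of the lemma.
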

\begin{proof}
Let $\kappa>0$. We first prove the equivalence in \eqref{Ksr}. Assume that $v_*^{\kappa}=v_+$. From \eqref{2Rarefaction}, we get $\rho_*^{\kappa}=\rho_+$. Using the equalities $v_*^{\kappa}=v_+$ and $\rho_*^{\kappa}=\rho_+$ in \eqref{1Shock}, we obtain
\begin{equation}
v_+-v_-=-\sqrt{\dfrac{\kappa\big(\rho_+^{\gamma}-{\rho_-}^{\gamma}\big)}{\rho_+\rho_-(\rho_+-\rho_-)}}(\rho_+-\rho_-)=-\sqrt{\dfrac{\kappa\big(\rho_+^{\gamma}-{\rho_-}^{\gamma}\big)(\rho_+-\rho_-)}{\rho_+\rho_-}}
\end{equation}
which implies, by taking the square in both sides, that $\kappa=\kappa_{sr}$. 
Conversely, suppose that $\kappa=\kappa_{sr}$. Then 
\begin{equation}
v_--v_+=\sqrt{\dfrac{\kappa_{sr}\big(\rho_+^{\gamma}-{\rho_-}^{\gamma}\big)}{\rho_+\rho_-(\rho_+-\rho_-)}}(\rho_+-\rho_-).
\label{kref2}
\end{equation}
We claim that $\rho_*^{\kappa_{sr}}=\rho_+$. In fact, assume that  $\rho_*^{\kappa_{sr}}<\rho_+$.
By combining \eqref{2Rarefaction} and \eqref{1Shock}, we obtain
\begin{equation}
v_--v_+=\sqrt{\dfrac{\kappa_{sr}\big((\rho_{*}^{\kappa_{sr}})^{\gamma}-\rho_-^{\gamma}\big)}{\rho_*^{\kappa_{sr}}\rho_-(\rho_*^{\kappa_{sr}}-\rho_-)}}(\rho_*^{\kappa_{sr}}-\rho_-)-\dfrac{2\sqrt{\kappa_{sr}\gamma}}{\gamma-1}\big({\rho_+}^{\frac{\gamma-1}{2}}-(\rho_*^{\kappa_{sr}})^{\frac{\gamma-1}{2}}\big), \quad \rho_-<\rho_*^{\kappa_{sr}}<\rho_+.
\label{kref3}
\end{equation}
The monotonic increasing property of the function  $h_1:\rho\rightarrow(\rho^{\gamma}-\rho_-^{\gamma})(1-\frac{\rho_-}{\rho})$ implies that \eqref{kref2} and \eqref{kref3} cannot both be true.
Using the equality $\rho_*^{\kappa_{sr}}=\rho_+$ in \eqref{1Shock}, we obtain
\begin{equation}
v_--v_*^{\kappa_{sr}}=\sqrt{\dfrac{\kappa_{sr}\big(\rho_+^{\gamma}-{\rho_-}^{\gamma}\big)}{\rho_+\rho_-(\rho_+-\rho_-)}}(\rho_+-\rho_-),
\label{kref22}
\end{equation}
which, combined with \eqref{kref2}, implies that $v_*^{\kappa_{sr}}=v_*^{\kappa}=v_+$. Hence, the equivalence in \eqref{Ksr} holds. From \eqref{1Shock}, we get $v_*^{\kappa}<v_-$. It remains to prove that $v_*^{\kappa}>v_+$ for all $\kappa\in(0,\kappa_{sr})$. We proceed by contradiction. Suppose there exists $\kappa_1\in(0,\kappa_{sr})$ such that  $v_*^{\kappa_1}$ and $v_+$ are connected by a $2$-rarefaction wave. On the one hand, using the inequality $\kappa_1<\kappa_{sr}$ in the definition of $\kappa_{sr}$ in \eqref{Ksr}, one gets
\begin{equation}
v_--v_+>\sqrt{\dfrac{\kappa_1\big(\rho_+^{\gamma}-{\rho_-}^{\gamma}\big)}{\rho_+\rho_-(\rho_+-\rho_-)}}(\rho_+-\rho_-).
\label{kref4}
\end{equation}
On the other hand, as the intermediate state $(\rho_*^{\kappa_1},v_*^{\kappa_1})$ satisfies both \eqref{2Rarefaction} and \eqref{1Shock} then 
\begin{equation}
v_--v_+=\sqrt{\dfrac{\kappa_1\big((\rho_*^{\kappa_1})^{\gamma}-{\rho_-}^{\gamma}\big)}{\rho_*^{\kappa_1}\rho_-(\rho_*^{\kappa_1}-\rho_-)}}(\rho_*^{\kappa_1}-\rho_-)-\dfrac{2\sqrt{\kappa_1\gamma}}{\gamma-1}\big({\rho_+}^{\frac{\gamma-1}{2}}-(\rho_*^{\kappa_1})^{\frac{\gamma-1}{2}}\big),\quad \rho_-<\rho_*^{\kappa_1}<\rho_+.
\label{kref5}
\end{equation}
Again the monotonic increasing property of $h_1$ 
implies that \eqref{kref4} and \eqref{kref5} cannot both be true. Hence, $v_*^{\kappa}$ and $v_+$ are connected by a $2$-shock wave for all $\kappa\in(0,\kappa_{sr})$. This completes the proof.
\end{proof}

\begin{theorem}
\label{Theo1Shock2RarEulerEqu}
Let $v_->v_+$ and $\rho_{\pm}>0$. For some $\kappa>0$, assume that $(\rho^{\kappa},\rho^{\kappa} v^{\kappa})$ is a $1$-shock wave and $2$-rarefaction wave solution of \eqref{EulerEqu}-\eqref{PressIsentrFlow} with the Riemann data \eqref{EulerEquInitCond}. Then, when $\kappa\rightarrow0$, the solution $(\rho^{\kappa},\rho^{\kappa} v^{\kappa})$ tends to the $\delta$-shock solution of the pressureless gas system \eqref{NoPressEulerEqu} with the same Riemann data.
\end{theorem}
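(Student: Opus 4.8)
The plan is to reduce the statement to the two-shock case already settled by Chen and Liu \cite{Vacuum}, using Lemma \ref{krefsrlem1} to track the solution as $\kappa$ decreases. For a genuine $2$-rarefaction the relation \eqref{2Rarefaction} forces $v_*^\kappa\le v_+$, so the hypothesis of a $1$-shock/$2$-rarefaction solution can hold only for $\kappa\ge\kappa_{sr}$. The content of Lemma \ref{krefsrlem1} is precisely that this configuration breaks down once the pressure drops below the fixed threshold $\kappa_{sr}$ defined in \eqref{Ksr}: for every $\kappa\in(0,\kappa_{sr})$ one has $v_*^\kappa>v_+$, so the second wave is necessarily a $2$-shock and the solution is a genuine two-shock (a $1$-shock followed by a $2$-shock). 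Since we are letting $\kappa\to0$, we may therefore assume $\kappa<\kappa_{sr}$ throughout, and the task reduces to identifying the limit of a \emph{two-shock} solution carrying the given Riemann data.

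At this point I would invoke the theorem of Chen and Liu \cite{Vacuum}, which asserts that every two-shock solution of \eqref{EulerEqu}-\eqref{PressIsentrFlow} converges, as $\kappa\to0$, to the $\delta$-shock solution of \eqref{NoPressEulerEqu} with the same data; together with the reduction above this proves the claim. To keep the argument self-contained one can instead analyse the intermediate state $(\rho_*^\kappa,v_*^\kappa)$ directly for $\kappa<\kappa_{sr}$, where it is jointly fixed by \eqref{1Shock} and \eqref{2Shock}. The key steps are: (i) writing \eqref{1Shock} and \eqref{2Shock} as $v_--v_*^\kappa$ and $v_*^\kappa-v_+$ and adding them, so that $v_--v_+$ appears as a sum of two square roots whose leading-order balance as $\rho_*^\kappa\to\infty$ gives $\sqrt{\kappa(\rho_*^\kappa)^\gamma}\to \frac{(v_--v_+)\sqrt{\rho_-\rho_+}}{\sqrt{\rho_-}+\sqrt{\rho_+}}$, whence $\rho_*^\kappa\to+\infty$ and $\kappa(\rho_*^\kappa)^\gamma=O(1)$; (ii) taking the ratio of the same two relations to obtain $v_*^\kappa\to\sigma:=\frac{\sqrt{\rho_-}\,v_-+\sqrt{\rho_+}\,v_+}{\sqrt{\rho_-}+\sqrt{\rho_+}}$; (iii) inserting these asymptotics into the Rankine-Hugoniot speeds $\sigma_1^\kappa=\frac{\rho_*^\kappa v_*^\kappa-\rho_- v_-}{\rho_*^\kappa-\rho_-}$ and $\sigma_2^\kappa=\frac{\rho_*^\kappa v_*^\kappa-\rho_+ v_+}{\rho_*^\kappa-\rho_+}$ to show $\sigma_1^\kappa,\sigma_2^\kappa\to\sigma$, so the two shocks coalesce into one discontinuity; and (iv) computing the mass and momentum trapped in the shrinking intermediate region, $\rho_*^\kappa(\sigma_2^\kappa-\sigma_1^\kappa)\,t\to\sqrt{\rho_-\rho_+}\,(v_--v_+)\,t$ and $\rho_*^\kappa v_*^\kappa(\sigma_2^\kappa-\sigma_1^\kappa)\,t\to\sigma\sqrt{\rho_-\rho_+}\,(v_--v_+)\,t$, and checking that the limit satisfies the generalized Rankine-Hugoniot relations and entropy condition of \cite{Sheng} for a $\delta$-shock of weight $w(t)=\sqrt{\rho_-\rho_+}\,(v_--v_+)\,t$ travelling with speed $\sigma$.

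The main obstacle is the competition between the two limits $\kappa\to0$ and $\rho_*^\kappa\to+\infty$ in steps (i)-(ii): because \eqref{1Shock} and \eqref{2Shock} couple $\sqrt{\kappa}$ with an unboundedly growing density, one must pin down the exact blow-up rate $\kappa(\rho_*^\kappa)^\gamma=O(1)$ before the individual shock speeds and the concentrated mass can be evaluated. Once this rate is secured the remaining estimates are routine, and the monotonicity of $h_1$ already exploited in Lemma \ref{krefsrlem1} guarantees that the intermediate state is uniquely determined for each $\kappa<\kappa_{sr}$, so the limit is unambiguous.
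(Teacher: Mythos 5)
Your proof takes exactly the same route as the paper: Lemma \ref{krefsrlem1} shows the configuration becomes a genuine two-shock once $\kappa<\kappa_{sr}$, and then the two-shock-to-$\delta$-shock result of Chen and Liu \cite{Vacuum} (their Theorem 3.1) gives the conclusion. Your optional self-contained analysis in steps (i)--(iv) is essentially a correct reconstruction of Chen and Liu's own proof (the blow-up rate $\kappa(\rho_*^\kappa)^\gamma=O(1)$, the limit speed $\sigma$, and the concentrated mass $\sqrt{\rho_-\rho_+}\,(v_--v_+)\,t$ all match), but it is not needed beyond the citation, which is all the paper does.
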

\begin{proof}
Lemma \eqref{krefsrlem1} says that the solution converts to a two-shock wave solution of system \eqref{EulerEqu}-\eqref{PressIsentrFlow} when $\kappa$ gets smaller than $\kappa_{sr}$. From Theorem 3.1 in \cite{Vacuum}, we conclude that, when $\kappa\rightarrow0$, the solution tends to the $\delta$-shock solution of system \eqref{NoPressEulerEqu} with the same initial data.
\end{proof}

\section{Behaviour of a $1$-rarefaction and $2$-shock Riemann solution as the pressure vanishes}
\label{RarShock}
Let $v_-<v_+$ and $\rho_{\pm}>0$. For $\kappa>0$, let $(\rho_*^{\kappa},\rho_*^{\kappa}v_*^{\kappa})$ be the intermediate state of a solution $(\rho^{\kappa},\rho^{\kappa}v^{\kappa})$ of the system \eqref{EulerEqu}-\eqref{PressIsentrFlow} with Riemann data \eqref{EulerEquInitCond}, in the sense that $v_-$ and $v_*^{\kappa}$ are connected by a $1$-rarefaction wave, and $v_*^{\kappa}$ and $u_+$ are connected by a $2$-shock wave. Then, this intermediate state is determined by \eqref{1Rarefaction} and \eqref{2Shock} which  imply that  $\rho_->\rho_+$. The following results also hold:
\begin{lemma}
\label{krefrslem1}
\begin{equation}
v_-< v_*^{\kappa}<v_+,\text{ } \forall \kappa\in(0,\kappa_{rs}),\quad v_*^{\kappa}=v_+ \Longleftrightarrow \kappa=\kappa_{rs}:=\left(\dfrac{(v_+-v_-)(\gamma-1)}{2\sqrt{\gamma}(\rho_-^{\frac{\gamma-1}{2}}-\rho_+^{\frac{\gamma-1}{2}})}\right)^2.
\label{Krs}
\end{equation}
\end{lemma}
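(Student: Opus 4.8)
The plan is to transcribe the proof of Lemma~\ref{krefsrlem1}, interchanging the roles of the two wave families and of $\rho_-$ and $\rho_+$: the governing relations are now the $1$-rarefaction curve \eqref{1Rarefaction} and the $2$-shock curve \eqref{2Shock}, and the sign conventions force $\rho_+<\rho_*^{\kappa}<\rho_-$. First I would put the $2$-shock speed in \eqref{2Shock} in the same reduced form used for the $1$-shock in Lemma~\ref{krefsrlem1}. Using $\rho_*^{\kappa}>\rho_+$ one verifies that
\begin{equation*}
\sqrt{\dfrac{\kappa\big(\rho_+^{\gamma}-(\rho_*^{\kappa})^{\gamma}\big)}{\rho_+\rho_*^{\kappa}(\rho_+-\rho_*^{\kappa})}}\,(\rho_*^{\kappa}-\rho_+)=\sqrt{\dfrac{\kappa\,h_2(\rho_*^{\kappa})}{\rho_+}},\qquad h_2(\rho):=(\rho^{\gamma}-\rho_+^{\gamma})\Big(1-\dfrac{\rho_+}{\rho}\Big),
\end{equation*}
so that \eqref{2Shock} reads $v_*^{\kappa}-v_+=\sqrt{\kappa\,h_2(\rho_*^{\kappa})/\rho_+}$. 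The function $h_2$ is the exact analogue of $h_1$ with $\rho_-$ replaced by $\rho_+$, so the computation already carried out for $h_1$ shows that $h_2$ is positive and strictly increasing on $(\rho_+,\infty)$; this monotonicity is the engine of the whole argument.

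For the equivalence I would first prove the forward implication. Substituting $v_*^{\kappa}=v_+$ into the reduced $2$-shock relation forces $\sqrt{\kappa\,h_2(\rho_*^{\kappa})/\rho_+}=0$, hence $\rho_*^{\kappa}=\rho_+$ because $h_2(\rho)>0$ for $\rho>\rho_+$; inserting $\rho_*^{\kappa}=\rho_+$ and $v_*^{\kappa}=v_+$ into \eqref{1Rarefaction} and solving for $\kappa$ returns precisely the value $\kappa_{rs}$ in \eqref{Krs} (the hypotheses $v_+>v_-$ and $\rho_->\rho_+$ make $\kappa_{rs}$ positive and well defined). For the converse I would assume $\kappa=\kappa_{rs}$ and prove $\rho_*^{\kappa_{rs}}=\rho_+$ by contradiction, exactly as in Lemma~\ref{krefsrlem1}. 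Combining \eqref{1Rarefaction} with the reduced \eqref{2Shock} gives $v_+-v_-=F(\rho_*^{\kappa_{rs}})$, where
\begin{equation*}
F(\rho):=\dfrac{2\sqrt{\kappa_{rs}\gamma}}{\gamma-1}\big(\rho_-^{\frac{\gamma-1}{2}}-\rho^{\frac{\gamma-1}{2}}\big)-\sqrt{\dfrac{\kappa_{rs}\,h_2(\rho)}{\rho_+}},
\end{equation*}
whereas the definition of $\kappa_{rs}$ is exactly the identity $v_+-v_-=F(\rho_+)$ (note $h_2(\rho_+)=0$). Since $F$ is strictly decreasing on $(\rho_+,\rho_-)$ — the rarefaction term decreases in $\rho$ and $-\sqrt{h_2}$ decreases because $h_2$ increases — the equality $F(\rho_*^{\kappa_{rs}})=F(\rho_+)$ can only hold for $\rho_*^{\kappa_{rs}}=\rho_+$, and then \eqref{1Rarefaction} yields $v_*^{\kappa_{rs}}=v_+$.

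For the ordering, the lower bound is immediate: \eqref{1Rarefaction} with $\rho_*^{\kappa}<\rho_-$ gives $v_*^{\kappa}>v_-$ for every $\kappa$. The upper bound $v_*^{\kappa}<v_+$ on $(0,\kappa_{rs})$ I would again obtain by contradiction, copying the final part of Lemma~\ref{krefsrlem1}. If for some $\kappa_1\in(0,\kappa_{rs})$ the state $v_*^{\kappa_1}$ were joined to $v_+$ by a $2$-shock, then $\kappa_1<\kappa_{rs}$ together with \eqref{Krs} yields $v_+-v_->F_{\kappa_1}(\rho_+)$, where $F_{\kappa_1}$ denotes $F$ with $\kappa_{rs}$ replaced by $\kappa_1$; on the other hand, combining \eqref{1Rarefaction} and \eqref{2Shock} gives $v_+-v_-=F_{\kappa_1}(\rho_*^{\kappa_1})$ with $\rho_*^{\kappa_1}>\rho_+$. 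The strict monotonicity of $F_{\kappa_1}$ makes $F_{\kappa_1}(\rho_*^{\kappa_1})<F_{\kappa_1}(\rho_+)<v_+-v_-$, contradicting the equality. Hence the $2$-wave must be a $2$-rarefaction, so $v_*^{\kappa_1}<v_+$; combined with $v_*^{\kappa}>v_-$ this gives $v_-<v_*^{\kappa}<v_+$ on $(0,\kappa_{rs})$.

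The only genuine obstacle I anticipate is the monotonicity bookkeeping: identifying the auxiliary function $h_2$ and checking that the combined rarefaction-plus-shock map $F$ is strictly monotone, since this is what turns the two competing identities into a true contradiction. Once $h_2$ is available, every remaining step is a direct translation of Lemma~\ref{krefsrlem1} under the replacements $\rho_-\leftrightarrow\rho_+$, $v_--v_+\leftrightarrow v_+-v_-$, and $1\text{-shock}\leftrightarrow 2\text{-shock}$, $2\text{-rarefaction}\leftrightarrow 1\text{-rarefaction}$.
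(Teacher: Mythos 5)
Your proposal is correct and takes essentially the same route as the paper's proof: forward implication by direct substitution, converse and the bound $v_*^{\kappa}<v_+$ both by contradiction, with strict monotonicity turning the two competing identities for $v_+-v_-$ into an impossibility. The only cosmetic difference is that the paper invokes monotonicity of the rarefaction term $\rho\mapsto\rho^{\frac{\gamma-1}{2}}-\rho_-^{\frac{\gamma-1}{2}}$ (plus the sign of the shock term), whereas you bundle both contributions into the single decreasing function $F$ built from the shock-type function $h_2(\rho)=(\rho^{\gamma}-\rho_+^{\gamma})\big(1-\frac{\rho_+}{\rho}\big)$; either bookkeeping yields the same contradiction.
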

\begin{proof}
Let $\kappa>0$. We first prove the equivalence in \eqref{Krs}. Assume that $v_*^{\kappa}=v_+$. Using this equality in \eqref{2Shock}, we get $\rho_*^{\kappa}=\rho_+$. Now, taking $v_*^{\kappa}=v_+$ and $\rho_*^{\kappa}=\rho_+$ in \eqref{1Rarefaction}, we obtain
\begin{equation}
v_+=v_-+\dfrac{2\sqrt{\kappa\gamma}}{\gamma-1}(\rho_-^{\frac{\gamma-1}{2}}-\rho_+^{\frac{\gamma-1}{2}})
\end{equation}
which implies  that $\kappa=\kappa_{rs}$. Conversely, suppose that $\kappa=\kappa_{rs}$. Then 
\begin{equation}
v_+-v_-=\dfrac{2\sqrt{\kappa_{rs}\gamma}}{\gamma-1}(\rho_-^{\frac{\gamma-1}{2}}-\rho_+^{\frac{\gamma-1}{2}}).
\label{kref8}
\end{equation}
We claim that $\rho_*^{\kappa_{rs}}=\rho_+$. In fact, assume that $\rho_+<\rho_*^{\kappa_{rs}}$.
Combining \eqref{1Rarefaction} and \eqref{2Shock}, we obtain
\begin{equation}
v_+-v_-=\dfrac{2\sqrt{\kappa_{rs}\gamma}}{\gamma-1}\big({\rho_-}^{\frac{\gamma-1}{2}}-(\rho_*^{\kappa_{rs}})^{\frac{\gamma-1}{2}}\big)-\sqrt{\dfrac{\kappa_{rs}\big((\rho_{*}^{\kappa_{rs}})^{\gamma}-\rho_+^{\gamma}\big)}{\rho_*^{\kappa_{rs}}\rho_+(\rho_*^{\kappa_{rs}}-\rho_+)}}(\rho_*^{\kappa_{rs}}-\rho_+), \quad \rho_+<\rho_*^{\kappa_{rs}}<\rho_-.
\label{kref9}
\end{equation}
The monotonic increasing property of the function $h_2:\rho\rightarrow\rho^{\frac{\gamma-1}{2}}-\rho_-^{\frac{\gamma-1}{2}}$
implies that  \eqref{kref8} and \eqref{kref9} cannot both  be true.
Taking the equality $\rho_*^{\kappa_{rs}}=\rho_+$  in \eqref{1Rarefaction}, we obtain 
\begin{equation}
v_*^{\kappa_{rs}}-v_-=\dfrac{2\sqrt{\kappa_{rs}\gamma}}{\gamma-1}(\rho_-^{\frac{\gamma-1}{2}}-\rho_+^{\frac{\gamma-1}{2}}),
\label{kref18}
\end{equation}
which, combined with \eqref{kref8}, implies that $v_*^{\kappa}=v_*^{\kappa_{sr}}=v_+$. Hence, the equivalence in \eqref{Krs} holds. From \eqref{1Rarefaction}, we get $v_*^{\kappa}>v_-$. It remains to prove that $v_*^{\kappa} < v_+$ for all  $\kappa\in(0,\kappa_{rs})$. We proceed by contradiction. Suppose that there exists $\kappa_2\in(0,\kappa_{rs})$ such that $v_*^{\kappa_2}$ and $v_+$ are connected by a $2$-shock wave. Then, using the inequality $\kappa_2<\kappa_{rs}$ in the definition of $\kappa_{rs}$ in \eqref{Krs}, we get 
\begin{equation}
v_+-v_->\dfrac{2\sqrt{\kappa_2\gamma}}{\gamma-1}(\rho_-^{\frac{\gamma-1}{2}}-\rho_+^{\frac{\gamma-1}{2}}).
\label{kref10}
\end{equation}
As the intermediate state $(\rho_*^{\kappa_2},v_*^{\kappa_2})$ satisfies both \eqref{1Rarefaction} and \eqref{2Shock} then  
\begin{equation}
v_+-v_-=\dfrac{2\sqrt{\kappa_2\gamma}}{\gamma-1}\big({\rho_-}^{\frac{\gamma-1}{2}}-(\rho_*^{\kappa_2})^{\frac{\gamma-1}{2}}\big)-\sqrt{\dfrac{\kappa_2\big((\rho_*^{\kappa_2})^{\gamma}-{\rho_+}^{\gamma}\big)}{\rho_*^{\kappa_2}\rho_+(\rho_*^{\kappa_2}-\rho_+)}}(\rho_*^{\kappa_2}-\rho_+), \quad \rho_+<\rho_*^{\kappa_2}<\rho_-.
\label{kref11}
\end{equation}
Again, the monotonic increasing property of $h_2$ implies that \eqref{kref10} and \eqref{kref11} cannot both be true. Hence,  $v_*^{\kappa}$ and $v_+$ are connected by a $2$-rarefaction wave for all $\kappa\in(0,\kappa_{rs})$. This completes the proof.
\end{proof}

\begin{theorem}
\label{Theo1Rar2ShockEulerEqu}
Let $v_-<v_+$ and $\rho_{\pm}>0$. For some $\kappa>0$, assume that $(\rho^{\kappa},\rho^{\kappa} v^{\kappa})$ is a $1$-rarefaction wave and $2$-shock wave  solution of \eqref{EulerEqu}-\eqref{PressIsentrFlow} with the Riemann initial data \eqref{EulerEquInitCond}. Then, when $\kappa\rightarrow0$, the solution $(\rho^{\kappa},\rho^{\kappa} v^{\kappa})$ tends to the two-contact-discontinuity solution  of the pressureless gas system \eqref{NoPressEulerEqu} with the same Riemann data.
\end{theorem}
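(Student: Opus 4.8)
The plan is to mirror the proof of Theorem \ref{Theo1Shock2RarEulerEqu}, exploiting the fact that the essential analytical work has already been carried out in Lemma \ref{krefrslem1}. The key observation is that, although the solution is assumed to be a $1$-rarefaction combined with a $2$-shock at the given value of $\kappa$, this configuration is not preserved as the pressure vanishes. Indeed, Lemma \ref{krefrslem1} guarantees that once $\kappa$ drops below the threshold $\kappa_{rs}$, the intermediate velocity satisfies $v_-<v_*^\kappa<v_+$ and $v_*^\kappa$ is connected to $v_+$ by a $2$-rarefaction wave rather than a $2$-shock. Hence for every $\kappa\in(0,\kappa_{rs})$ the solution is genuinely a two-rarefaction wave solution of \eqref{EulerEqu}-\eqref{PressIsentrFlow}.

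First I would invoke Lemma \ref{krefrslem1} to reduce the problem to the already-understood two-rarefaction case. Since the limit $\kappa\rightarrow0$ concerns only arbitrarily small values of $\kappa$, and in particular values below $\kappa_{rs}$, the behaviour of the solution at the fixed initial $\kappa$ (where it is a $1$-rarefaction/$2$-shock) is irrelevant to the limit. This is the conceptual crux: the conversion established in the lemma lets us replace the mixed configuration by a pure two-rarefaction configuration throughout the range of $\kappa$ relevant to the degeneracy. I would then apply the companion result of Chen and Liu from \cite{Vacuum}, namely that any two-rarefaction wave solution of the isentropic Euler equations tends, as the pressure vanishes, to the two-contact-discontinuity solution with an intermediate vacuum state of the pressureless gas system \eqref{NoPressEulerEqu} carrying the same Riemann data \eqref{EulerEquInitCond}. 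Applying this to the two-rarefaction solution obtained for $\kappa\in(0,\kappa_{rs})$ yields the asserted convergence.

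I do not anticipate a serious obstacle in the final step: once Lemma \ref{krefrslem1} has reclassified the solution as two-rarefaction, the conclusion follows directly from the existing degeneracy result. The only point requiring care is to confirm that the limiting two-contact-discontinuity solution is taken with the same Riemann data, so that the quoted result of \cite{Vacuum} applies verbatim; this is automatic, since both the assumed solution and its two-rarefaction reclassification share the data $(\rho_\pm,v_\pm)$.

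\begin{proof}
Lemma \ref{krefrslem1} shows that the solution converts to a two-rarefaction wave solution of system \eqref{EulerEqu}-\eqref{PressIsentrFlow} when $\kappa$ gets smaller than $\kappa_{rs}$. From Theorem 3.2 in \cite{Vacuum}, we conclude that, when $\kappa\rightarrow0$, the solution tends to the two-contact-discontinuity solution of system \eqref{NoPressEulerEqu} with the same initial data.
\end{proof}
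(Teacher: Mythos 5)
Your proposal is correct and follows essentially the same route as the paper's own proof: invoke Lemma \ref{krefrslem1} to convert the mixed configuration into a two-rarefaction wave for all $\kappa<\kappa_{rs}$, then apply Chen and Liu's degeneracy result for two-rarefaction solutions. The only discrepancy is bibliographic: the paper points to Section 4 of \cite{Vacuum} for that result (Theorem 3.1 there being the $\delta$-shock case used in Theorem \ref{Theo1Shock2RarEulerEqu}), so your citation ``Theorem 3.2'' should be checked against the source, but this does not affect the argument.
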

\begin{proof}
Lemma \eqref{krefrslem1} says that the solution converts to a two-rarefaction wave solution of system \eqref{EulerEqu}-\eqref{PressIsentrFlow} when $\kappa$ gets smaller than $\kappa_{rs}$. From the result established by Chen and Liu (see Section 4 in \cite{Vacuum}), we conclude that, when $\kappa\rightarrow0$, the solution tends to the two-contact-discontinuity solution of system \eqref{NoPressEulerEqu} with the same initial data.
\end{proof}

\section{Numerical illustrations}
\label{SectNumRes}
This section is devoted to the numerical illustration of the theoretical analysis.  We use the modified Lax Friedrich scheme from \cite{Tadmor} to discretize the equations \eqref{EulerEqu}-\eqref{PressIsentrFlow} and we take $\gamma=1.4$.

We first illustrate the behaviour of a solution composed of a $1$-rarefaction wave and a $2$-shock wave when the pressure coefficient $\kappa$ vanishes. The Riemann data are 
\begin{equation}
(\rho,v)(x,0)=
\left\lbrace
\begin{aligned}
&(1.0,0.8),\quad \text{for}\quad x<0,\\
&(0.5,1.0),\quad \text{for}\quad x>0,\\
\end{aligned}
\right.
\end{equation}
from which, we calculate by using \eqref{Krs}, the coefficient $\kappa_{rs}\approx 0.07$. Numerical results, when the pressure coefficient is decreased, are represented in Figure \ref{FigVacuumFormationInEulerEqu}. When $\kappa=\kappa_{rs}$, we observe that the $2$-shock wave disappears. When $\kappa$ gets smaller than $\kappa_{rs}$, the solution converts to a two-rarefaction wave. When $\kappa$ tends to zero, the two-rarefaction wave tends to a two-contact-discontinuity, whose intermediate state between the two contact discontinuities tends to a vacuum state.

\begin{figure}[!h]
\centering
\begin{tabular}{ccc}
\includegraphics[scale=0.35]{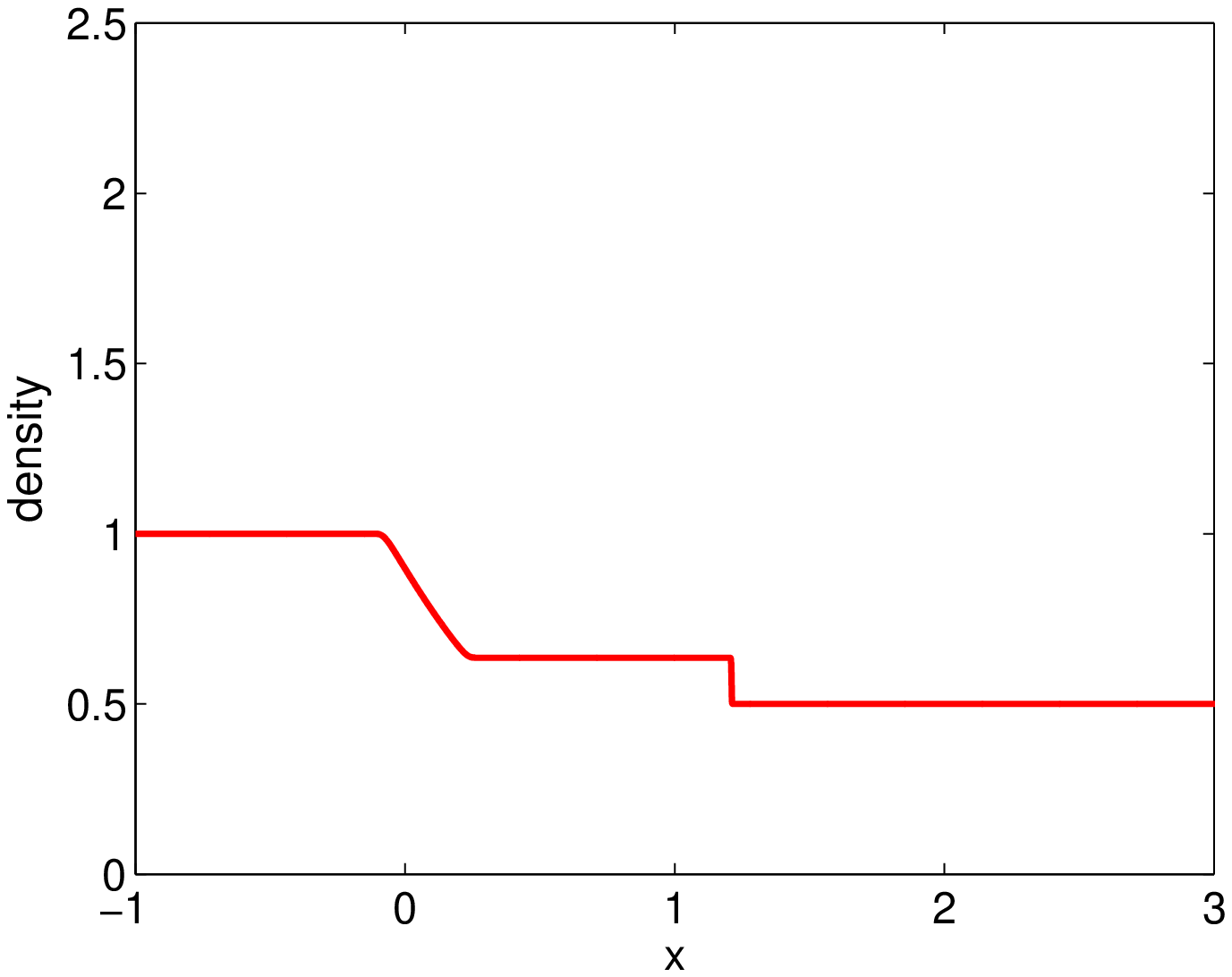}&\includegraphics[scale=0.35]{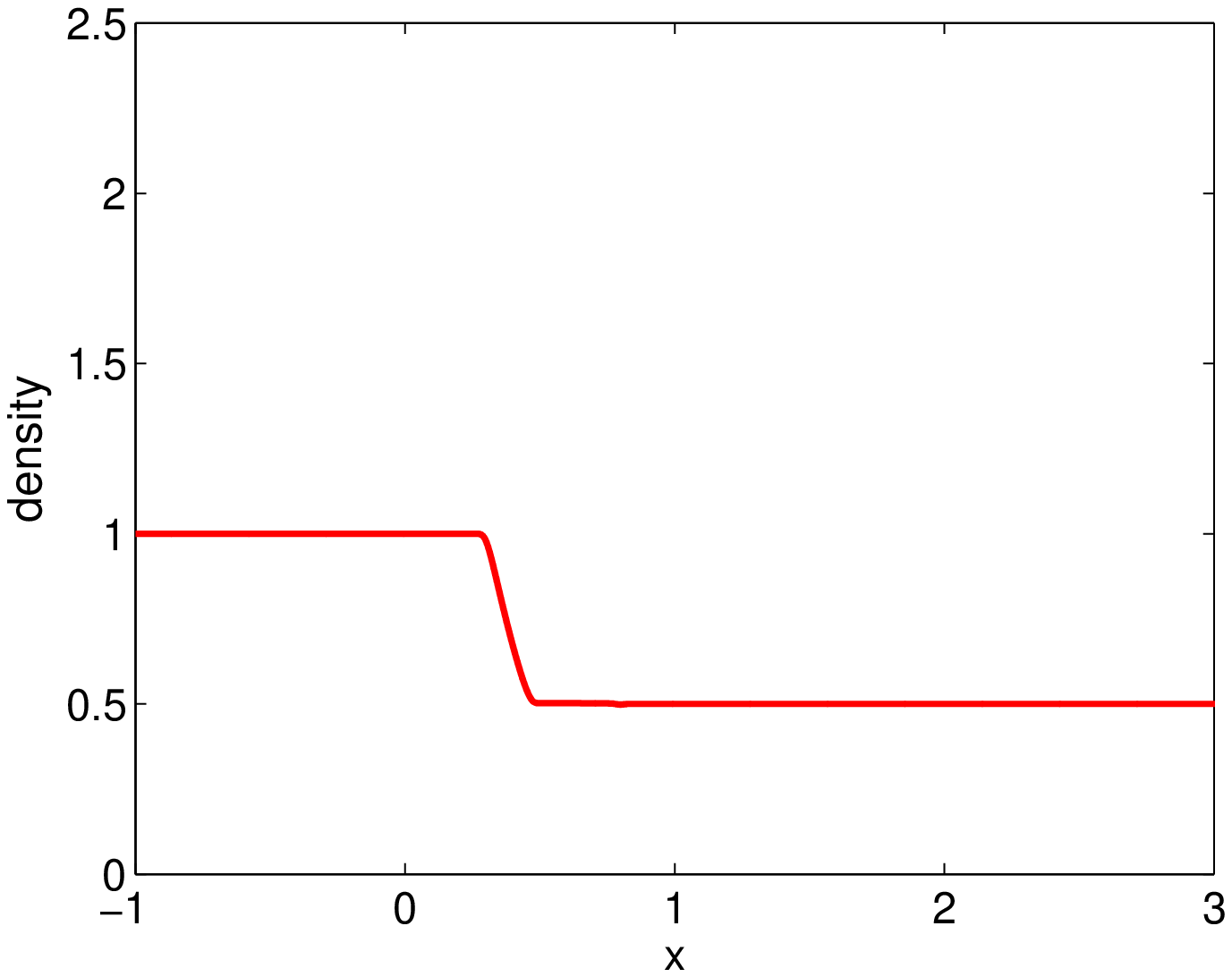}&\includegraphics[scale=0.35]{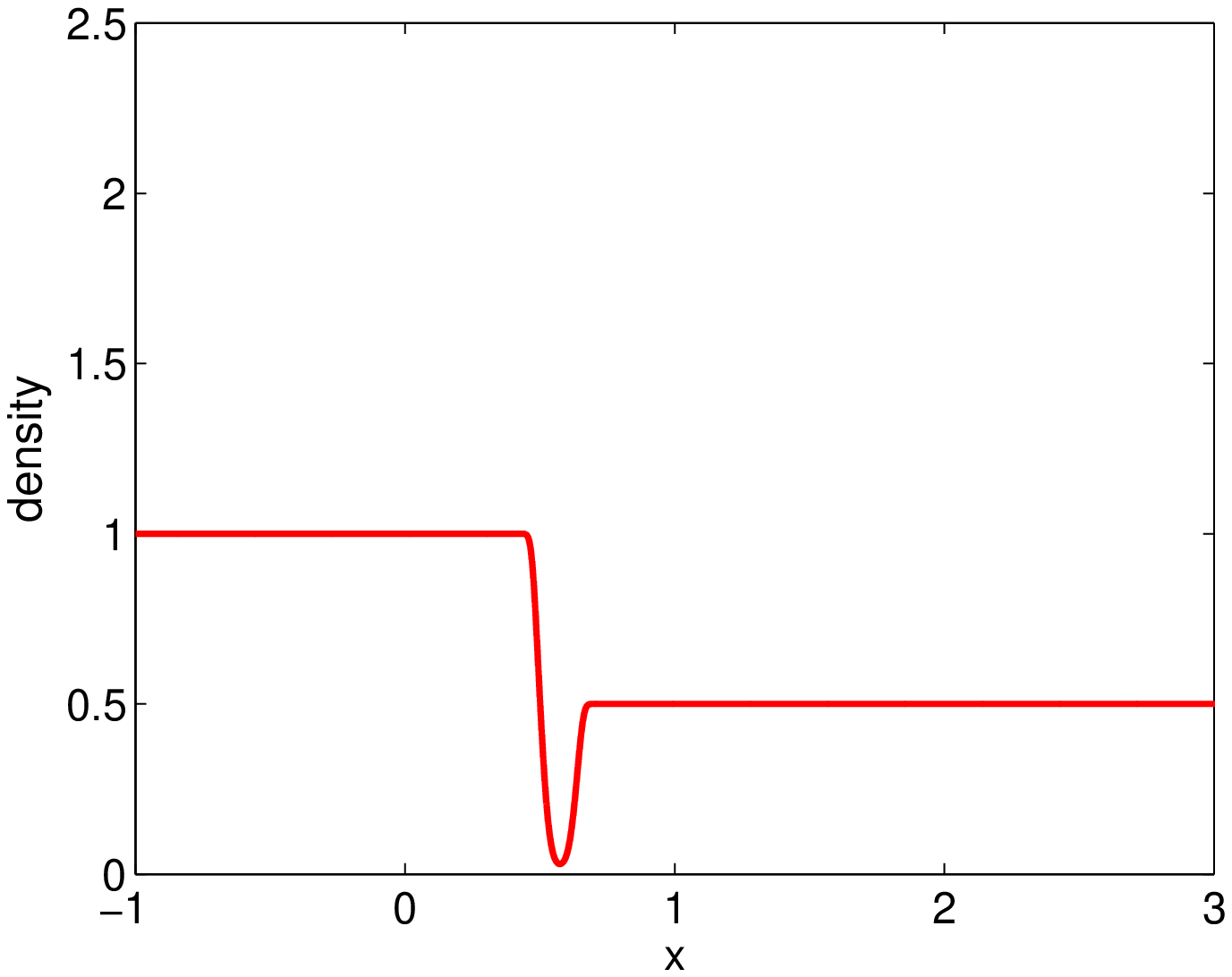}\\
\includegraphics[scale=0.35]{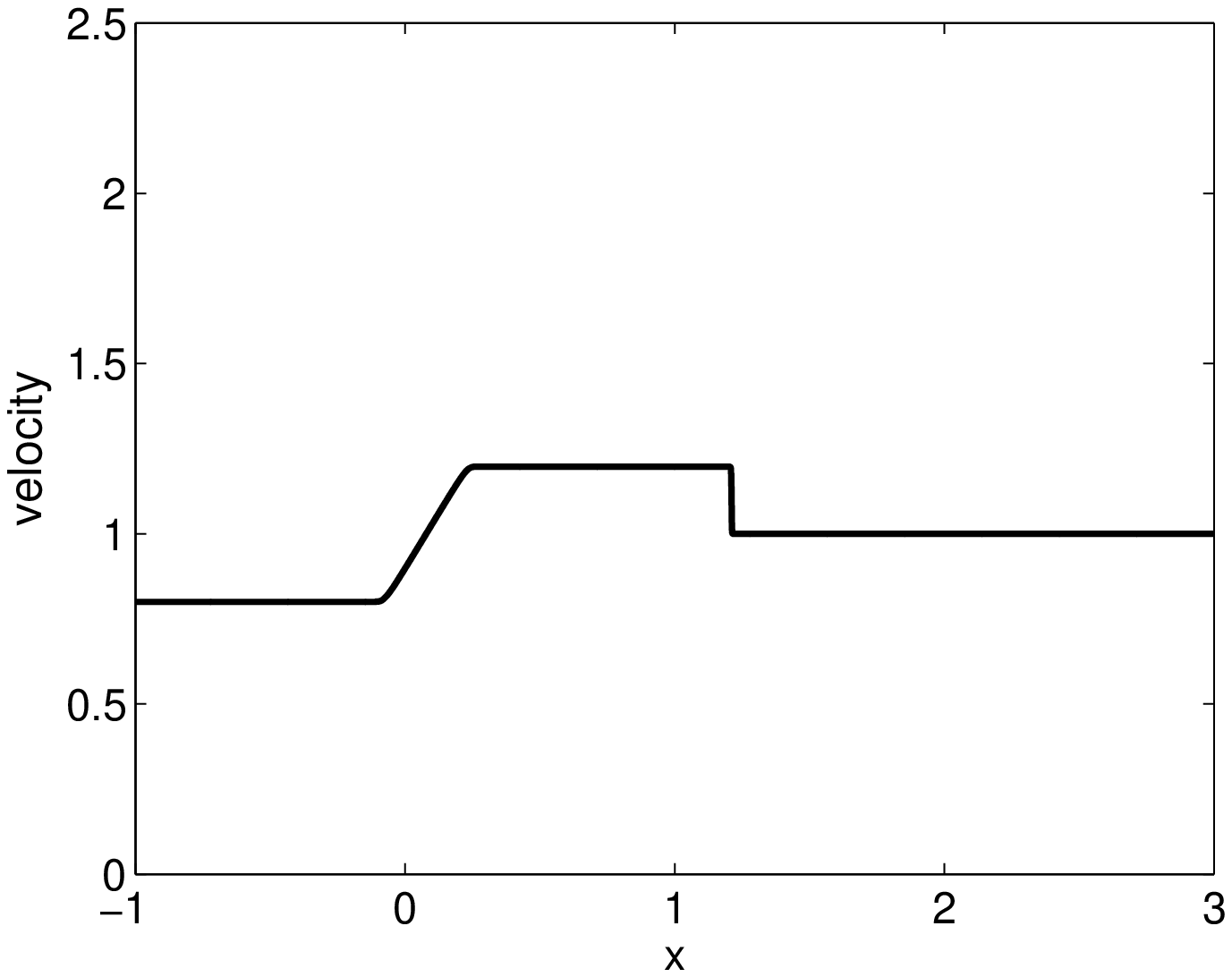}&\includegraphics[scale=0.35]{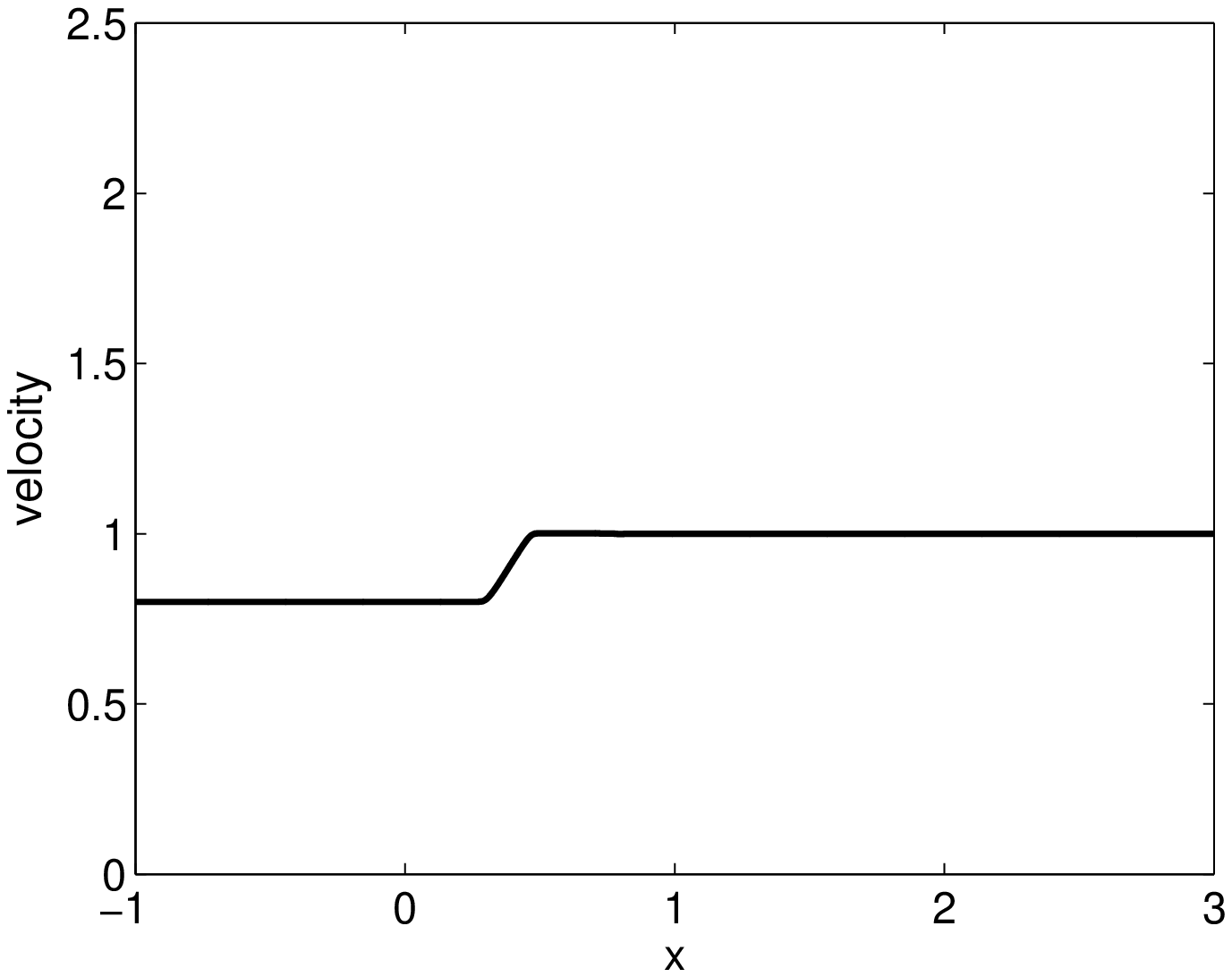}&\includegraphics[scale=0.35]{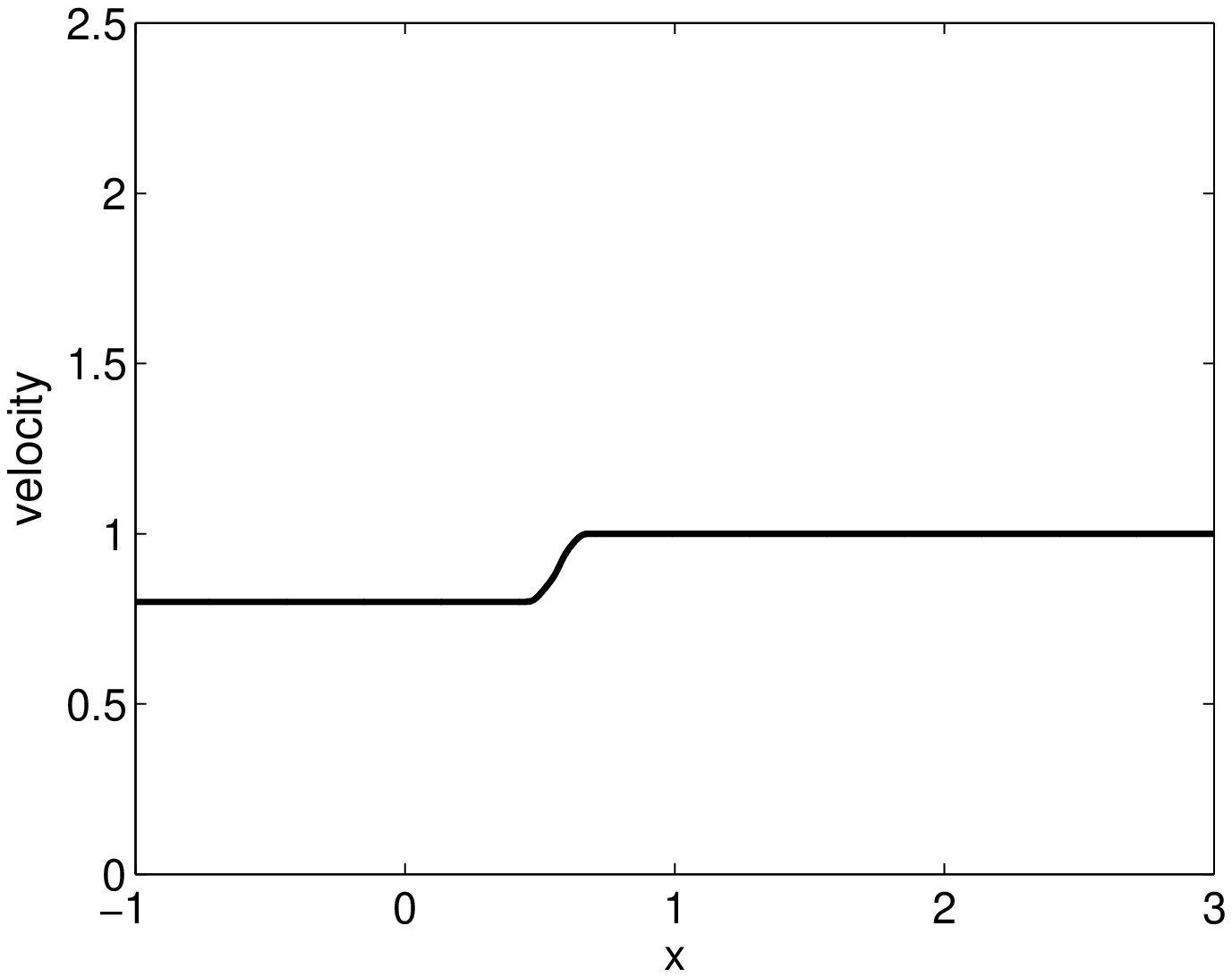}\\
$\kappa=0.6$ & $\kappa=\kappa_{rs}\approx 0.068$& $\kappa=0.001$
\end{tabular}
\caption{Formation of a vacuum state in the isentropic Euler equations as the pressure vanishes. $t=0.63$, $\gamma=1.4$, $\Delta x =10^{-4}$ and $\Delta t=2\times10^{-5}$.}
\label{FigVacuumFormationInEulerEqu}
\end{figure} 

The behaviour of a solution composed of a $1$-shock wave and a $2$-rarefaction wave is illustrated using the Riemann data
\begin{equation}
(\rho,v)(x,0)=
\left\lbrace
\begin{aligned}
&(0.2,1.5),\quad \text{for}\quad x<0,\\
&(0.7,1.0),\quad \text{for}\quad x>0,\\
\end{aligned}
\right.
\end{equation}
from which, we calculate by using \eqref{Ksr}, the coefficient $\kappa_{sr}\approx 0.14$. Numerical solutions, when the pressure coefficient $\kappa$ is decreased, are shown in Figure \ref{FigDeltaShockFormationInEulerEqu}. When $\kappa=\kappa_{sr}$, we notice that the $2$-rarefaction wave disappears. When $\kappa$ gets smaller than $\kappa_{sr}$, the solution converts to a two-shock wave that tends to a delta-shock wave as the pressure coefficient $\kappa$ progressively vanishes.

\begin{figure}[!h]
\centering
\begin{tabular}{ccc}
\includegraphics[scale=0.35]{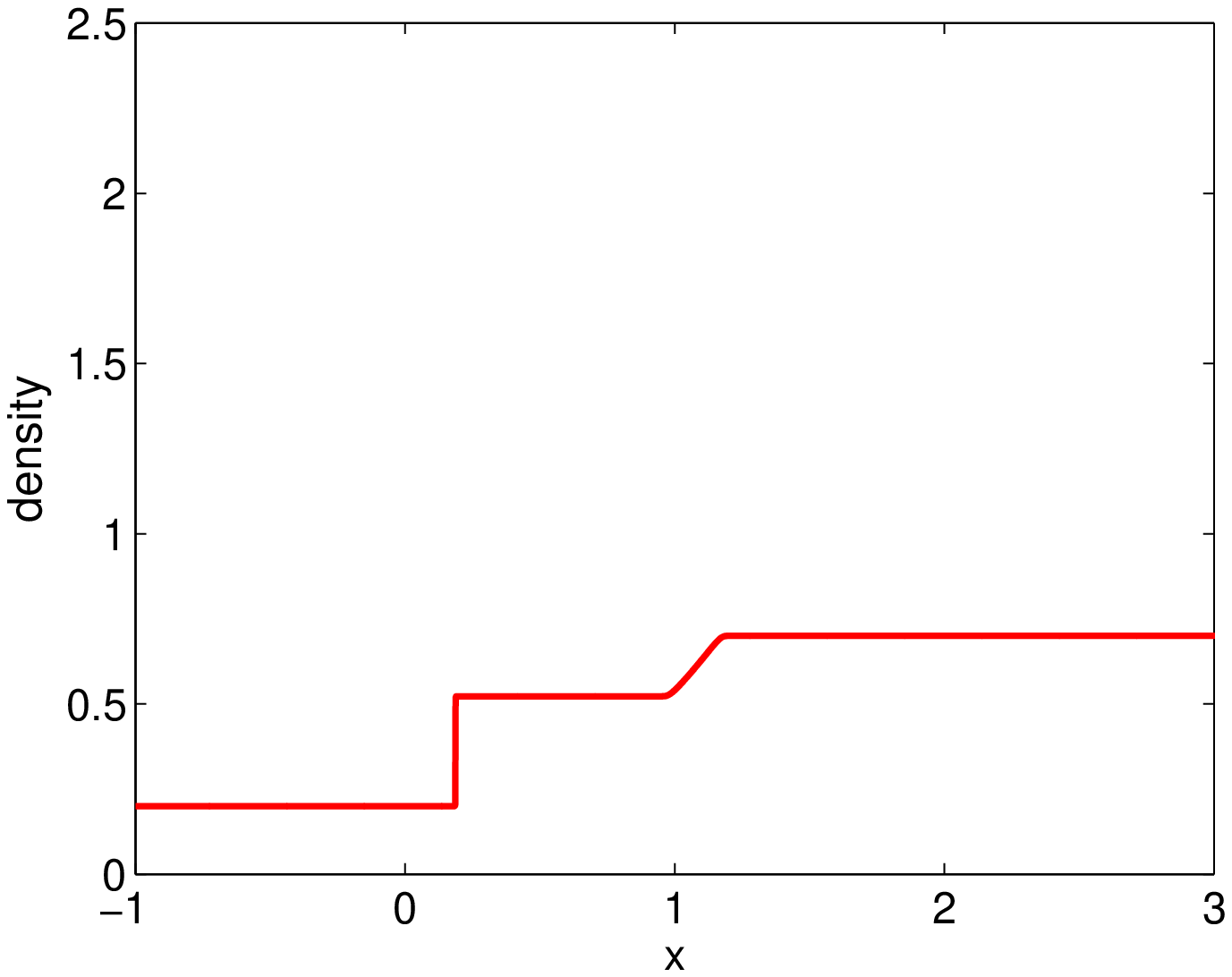}&\includegraphics[scale=0.35]{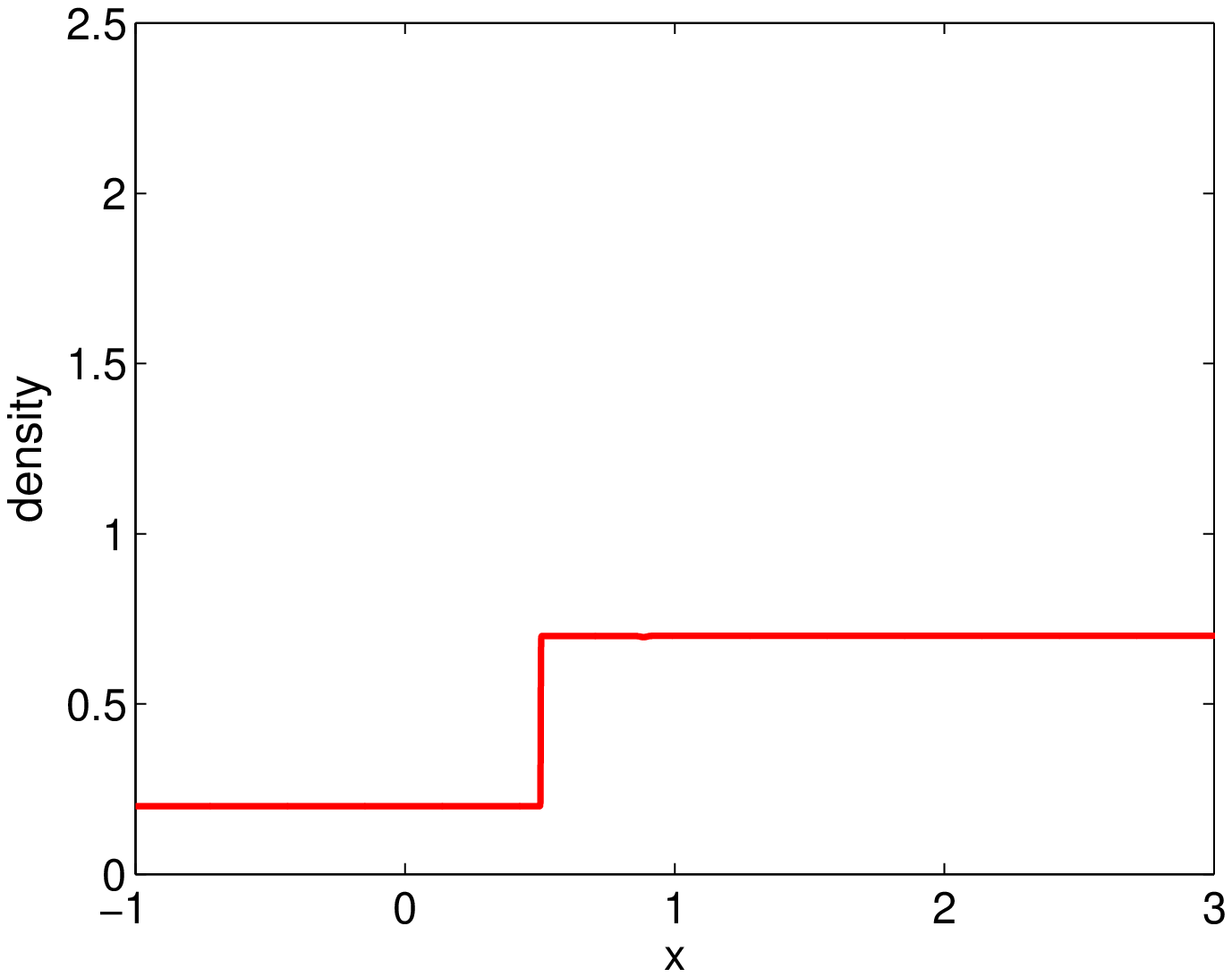}&\includegraphics[scale=0.35]{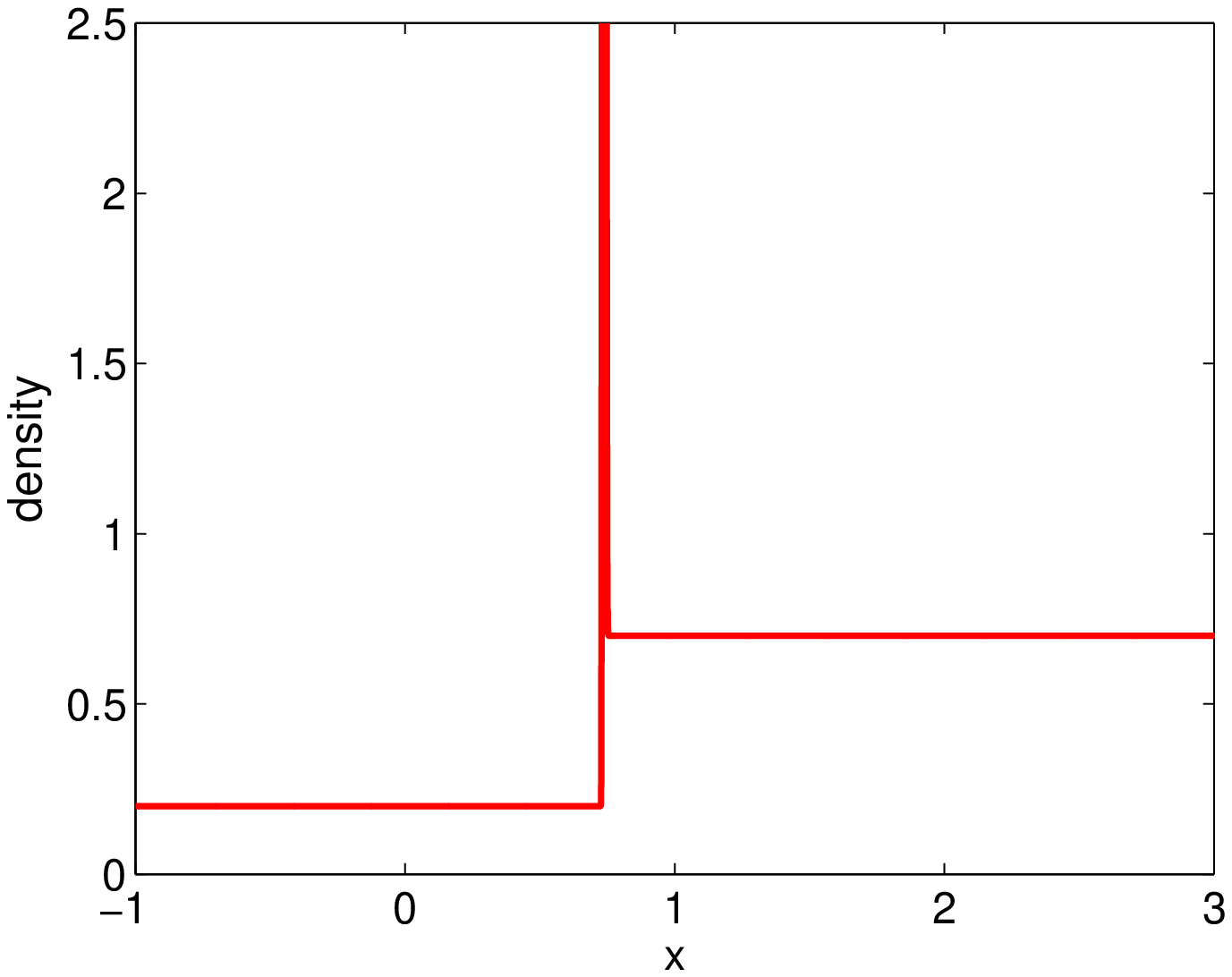}\\
\includegraphics[scale=0.35]{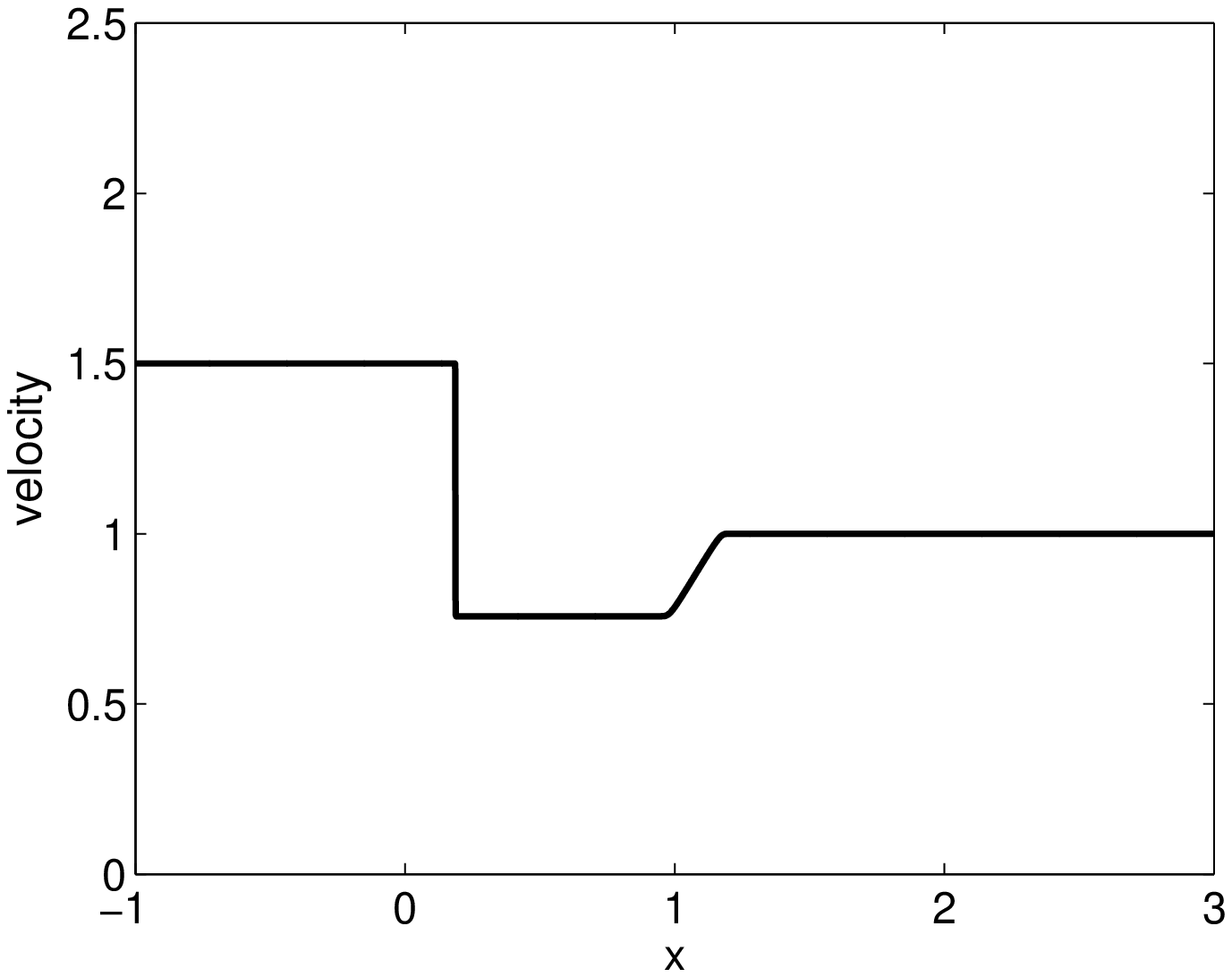}&\includegraphics[scale=0.35]{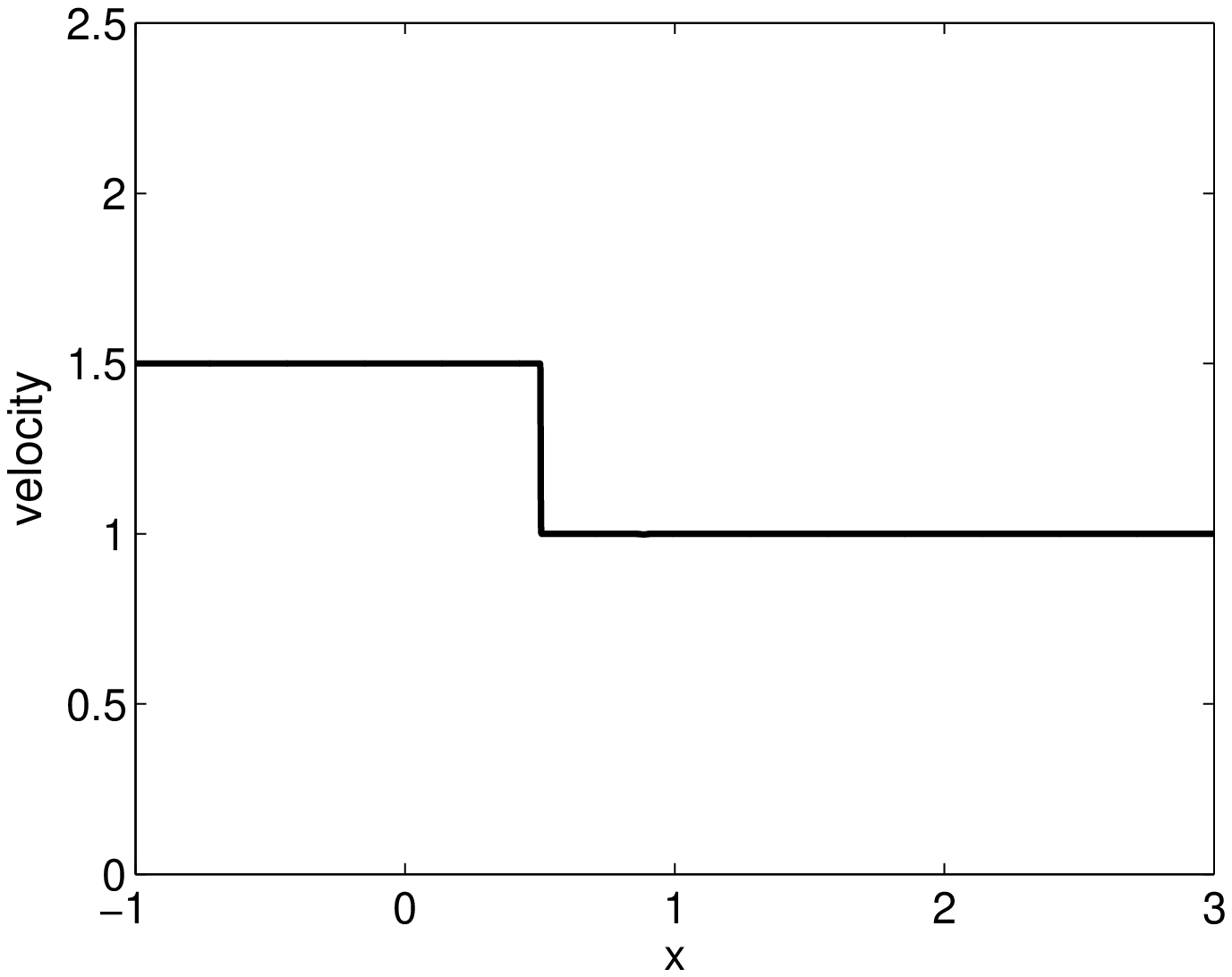}&\includegraphics[scale=0.35]{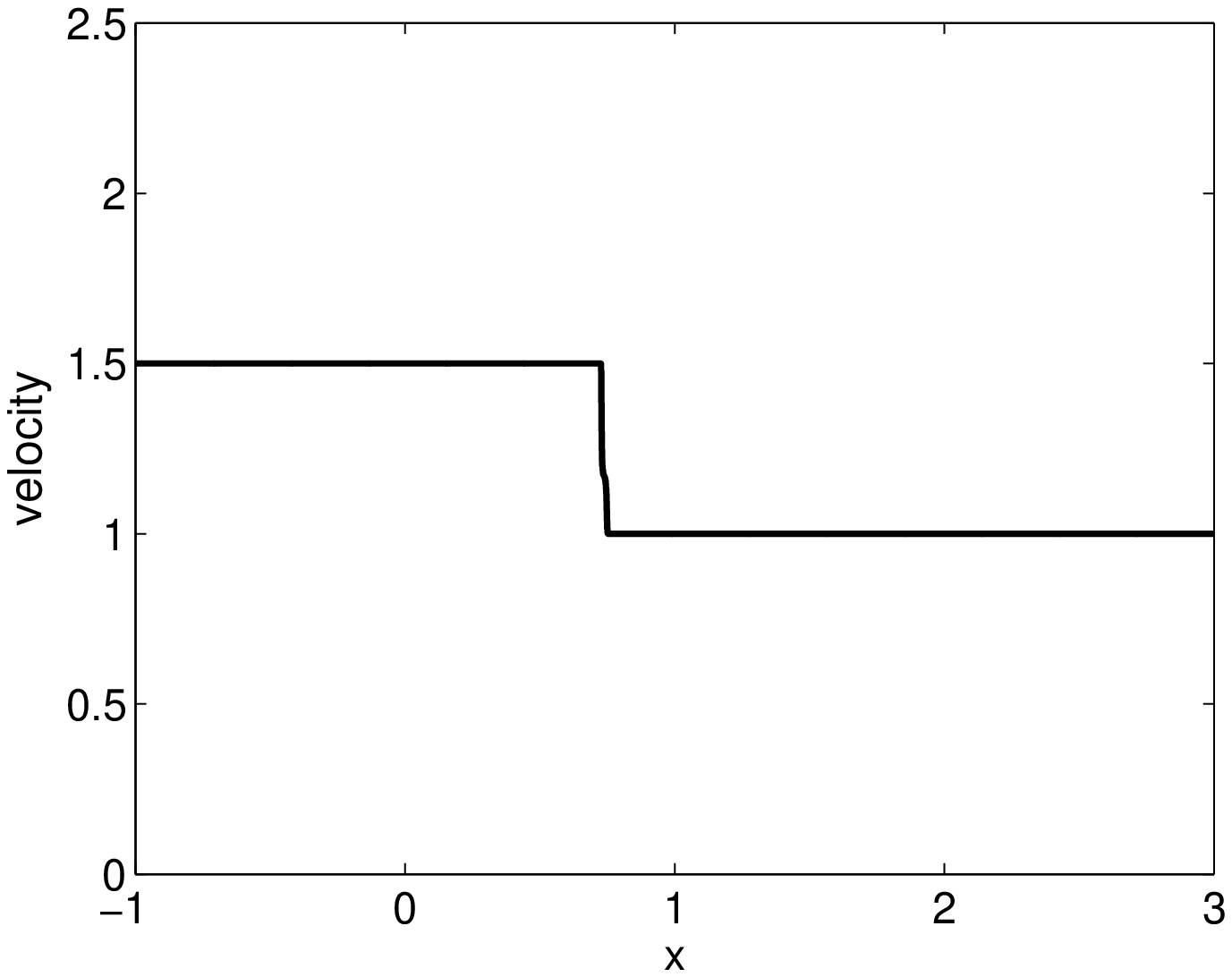}\\
$\kappa=0.6$ & $\kappa=\kappa_{sr}\approx0.14$&$\kappa=0.001$\\\\
\end{tabular}
\caption{Formation of a delta-shock wave in the isentropic Euler equations as the pressure vanishes. $t=0.63$,  $\gamma=1.4$, $\Delta x =10^{-4}$ and $\Delta t=2\times10^{-5}$.}
\label{FigDeltaShockFormationInEulerEqu}
\end{figure}

\newpage


\bibliographystyle{plain} 
\bibliography{biblio}

\begin{thebibliography}{10}

\bibitem{Bouchut2}
F.~Bouchut.
\newblock On zero pressure gas dynamics.
\newblock In {\em Advances in kinetic theory and computing, Vol.\ 22}, pages
  171--190. 1994.

\bibitem{Vacuum}
G.-Q. Chen and H.~Liu.
\newblock {Formation of delta-shocks and vacuum states in the vanishing
  pressure limit of solutions to the {E}uler equations for isentropic fluids}.
\newblock {\em SIAM J. Math. Anal.}, 34:925--938, 2003.

\bibitem{CHEN2004}
G.-Q. Chen and H.~Liu.
\newblock Concentration and cavitation in the vanishing pressure limit of
  solutions to the {E}uler equations for nonisentropic fluids.
\newblock {\em Phys. D}, 189:141--165, 2004.

\bibitem{weinan1996}
W.~E, Yu.G. Rykov, and Ya.G. Sinai.
\newblock Generalized variational principles, global weak solutions and
  behavior with random initial data for systems of conservation laws arising in
  adhesion particle dynamics.
\newblock {\em Comm. Math. Phys.}, 177:349--380, 1996.

\bibitem{Serre}
D.~Serre.
\newblock {\em Systems of {C}onservation {L}aws: {H}yperbolicity, {E}ntropies,
  {S}hock {W}aves}.
\newblock Cambridge University Press, Cambridge, 1999.

\bibitem{Sheng}
W.~Sheng and T.~Zhang.
\newblock {\em The {R}iemann {P}roblem for the {T}ransportation {E}quations in
  {G}as {D}ynamics}.
\newblock American Mathematical Society, 1999.

\bibitem{Smoller}
J.~Smoller.
\newblock {\em Shock {Wa}ves and {R}eaction-{D}iffusion {E}quations}.
\newblock Springer-{V}erlag, {N}ew {Y}ork, 1994.

\bibitem{Tadmor}
E.~Tadmor.
\newblock Numerical viscosity and the entropy condition for conservative
  difference schemes.
\newblock {\em Math. Comp.}, 43:369--381, 1984.

\bibitem{YANG2014}
H.~Yang and J.~Wang.
\newblock Delta-shocks and vacuum states in the vanishing pressure limit of
  solutions to the isentropic {E}uler equations for modified {C}haplygin gas.
\newblock {\em J. Math. Anal. Appl.}, 413:800--820, 2014.

\bibitem{YIN2009}
G.~Yin and W.~Sheng.
\newblock Delta shocks and vacuum states in vanishing pressure limits of
  solutions to the relativistic {E}uler equations for polytropic gases.
\newblock {\em J. Math. Anal. Appl.}, 355:594--605, 2009.

\end{thebibliography}

\end{document}